\documentclass[11pt]{amsart}
\usepackage{amsmath,amssymb}

\theoremstyle{plain}
\newtheorem{theorem}{Theorem}[section]
\newtheorem{proposition}{Proposition}[section]

\newtheorem{lemma}{Lemma}[section]
\theoremstyle{remark}
\newtheorem*{remark}{Remark}

\newcommand\FF{{\mathbb F}}
\newcommand\cH{{\mathcal H}}
\newcommand\wt{{\rm wt}}

\title{Counting De Bruijn sequences as perturbations of linear recursions} 
\author{Don Coppersmith, Robert C. Rhoades, and Jeffrey M. VanderKam}
\address{Center for Communications Research; Princeton, NJ 08534}
\date{}

\begin{document}

\begin{abstract} 
 Every binary  De~Bruijn sequence of order $n$ satisfies a recursion 
 $0=x_n+x_0+g(x_{n-1},\dots,x_1)$.  Given a function $f$ on
  $n-1$ bits, let $N(f; r)$ be the number of functions generating a
  De~Bruijn sequence of order $n$ which are obtained by changing $r$
  locations in the truth table of $f$. We prove a formula for the
  generating function $\sum_r N(\ell; r) y^r$ when $\ell$ is a linear
  function.

  The proof uses a weighted Matrix Tree Theorem and a
  description of the in-trees (or rooted trees) in the $n$-bit
  De~Bruijn graph as perturbations of the Hamiltonian paths in the same
  graph.
\end{abstract}

\maketitle

\section{ Introduction and Statement of Result}

A (binary) De Bruijn sequence of order $n$ is an infinite 0/1 sequence
with period $2^n$ such that every $n$ long pattern appears exactly
once in a period.  The appearance of De Bruijn sequences can be traced
back to at least 1869, to the invention of a Sanskrit word designed to
help students remember all three-syllable meters \cite[Section
  7.2.1.7]{knuthV4A}.  In the last 100 years De Bruijn sequences have
found application in diverse areas such as robotic vision,
cryptography, DNA sequencing, and even magic \cite[Chapters 2 and
  3,]{DG}. For an overview of the history of De Bruijn sequences the
reader may consult \cite[Section 7.2.1.1]{knuthV4A} and
\cite{fredricksen, ralston, stein}.

Every De Bruijn sequence of order $n$, $\left( \ldots, x_{-1}, x_0,
x_1, x_2, \ldots\right)$, satisfies an $n$-bit recursion
$$x_{i+n} = x_i + g(x_{i+n-1}, \ldots, x_{i+1}),$$ with $g\colon
\FF_2^{n-1} \to \FF_2$.  For example, the two De~Bruijn
sequences of order $3$ are
$$\ldots10111000\ldots \ \ \text{ and } \ \  \ldots11101000\ldots$$
The recursions for these two sequences are
$$x_{i+3} = x_i + x_{i+1} + \overline{x_{i+1}} \  \overline{x_{i+2}}
\ \ \ \text{ and } \ \ \
x_{i+3} = x_i + x_{i+2} + \overline{x_{i+1}} \ \overline{x_{i+2}},
$$ where $\overline{x} = 1+x$ when $x\in \FF_2$.  Each of these
recursions is nearly linear.  For instance, the distance between $x_1$
and the first function is only one, because $x_{1} + \overline{x_{1}}
\ \overline{x_{2}} = x_1$, except when $x_1 = x_2 = 0$.

In 1894 Flye Sainte-Marie \cite{sainte-marie} showed the number of
sequences, up to cyclic equivalence, is $2^{2^{n-1}-n}$. This was
rediscovered by De Bruijn in 1946
\cite{debruijn,debruijn2}\footnote{Also in 1946, the existence of such
  sequences (and generalizations to other alphabets) was rediscovered
  by Good \cite{Good}. More recently, many interesting analogs with
  more general combinatorial structures than binary strings have been
  investigated. See, for instance, \cite{CDG}.}.
This paper
provides a generalization of the 1894 result by counting De Bruijn
sequence recursions which differ from a given linear recursion in
exactly $k$ inputs to the functions.

Let $S(n)$ be the set of functions from $\FF_{2}^{n-1} \to \FF_2$ that
generate a De~Bruijn sequence of order $n$.  For example, $S(3) = \{
x_1 + \overline{x_1}\ \overline{x_2}, x_2
+\overline{x_1}\ \overline{x_2}\}.$ For $f\colon \FF_2^{n-1} \to
\FF_2$, define $S(f; k)$ to be the set of $g \in S(n)$ such that the
weight of $g+f$ is $k$.  The weight of a function is the number of
ones in the truth table. In other words, the weight of $f+g$ is the
number of disagreements between the functions $f$ and $g$.  Moreover,
let $N(f; k) = \left| S(f; k) \right|$ and
\begin{equation}\label{eqn:def_gen_func}
G(f; y) := \sum_k N(f; k) y^k.
\end{equation}
For example, if $n=3$ and $f = x_1$, then $S(x_1; 0) = S(x_1; 2) =
S(x_1; 4) = \emptyset$, $S(x_1; 1) = \{ x_1 + \overline{x_1}
\ \overline{x_2}\}$, and $S(x_1; 3) = \{x_2 +
\overline{x_1}\ \overline{x_2}\}$. Thus, $G(x_1; y) = y + y^3$.

The following notation is used to describe the main theorem of this
paper.  Given $f \colon \FF_2^{n-1} \to \FF_2$, let $C(f)$ denote the
set of sequences, up to cyclic equivalence, satisfying $x_{i+n} = x_i
+ f(x_{i+n-1}, \ldots, x_{i+1})$.  A \emph{necklace} of length $r$ is
an equivalence class of strings of length $r$ consisting of 0s and 1s,
taking all cyclic rotations as equivalent.  A necklace of length $r$
is \emph{primitive} if it is not periodic for any $p<r$.  Thus, each element
of $C(f)$ is represented by a primitive necklace class.  For any ${\bf
  c} \in C(f)$ let $d({\bf c})$ be the number of ones in the primitive
necklace class representing ${\bf c}$.

\begin{theorem}\label{thm:main}
  Let $\ell \colon  \FF_2^{n-1} \to \FF_2$ be a linear function with
  constant term equal to $0$.  Then
  $$G(\ell; y) = \sum_k N(\ell; k) y^k = 2^{-n} \prod_{ {\bf c} \in
    C(\ell) } p_{d({\bf c})}(y),$$
where $p_k(y) = (1+y)^k - (1-y)^k$
for $k>0$ and $p_0(y)=1$.
\end{theorem}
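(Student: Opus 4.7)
The plan is to encode $G(\ell;y)$ as a determinant via a weighted Matrix Tree Theorem, and then to exploit the cycle decomposition that $\ell$ imposes on the de~Bruijn graph to evaluate that determinant explicitly as $2^{-n}\prod_{\mathbf c}p_{d(\mathbf c)}(y)$. I work in the $n$-bit de~Bruijn graph $B_n$ on $\FF_2^n$: each function $g$ determines a functional subgraph $F_g$ via the successor map $\sigma_g(x_1,\dots,x_n)=(x_2,\dots,x_n,\,x_1+g(x_{n-1},\dots,x_1))$, and $g$ generates a De~Bruijn sequence of order $n$ exactly when $F_g$ is a Hamiltonian cycle of $B_n$. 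Writing $g=\ell+h$, every $v\in\FF_2^{n-1}$ with $h(v)=1$ swaps the outgoing arcs at the two states $(0,v)$ and $(1,v)$, so $F_g$ differs from the reference subgraph $F_\ell$ at exactly $2\wt(h)$ vertices of $B_n$.

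Since $\ell$ is $\FF_2$-linear with no constant term, $F_\ell$ is a disjoint union of directed cycles indexed by $C(\ell)$; cycle $\mathbf c$ visits $r_\mathbf c$ states, of which exactly $d(\mathbf c)$ have leading bit $1$. I would assign weight $1$ to edges of $B_n$ lying in $F_\ell$ and weight $y$ to edges not in $F_\ell$, and then apply a weighted Matrix Tree Theorem to the $2$-in/$2$-out graph $B_n$: after fixing a root $v_0$, the weighted number of in-trees rooted at $v_0$ equals the cofactor $\det L'(y)$ of the resulting weighted Laplacian. The paper's description of in-trees as perturbations of Hamiltonian paths provides the bridge to $G(\ell;y)$: every rooted in-tree of $B_n$ is shown to arise from a unique Hamiltonian path by local flips, so when the in-tree sum is reorganized along Hamiltonian paths the combinatorial factor equals $2^n$ (accounting for the $2^n$ choices of starting position along a Hamiltonian cycle), yielding the identity
\[
2^n\,G(\ell;y)\;=\;\det L'(y).
\]

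The remaining and principal task is to evaluate $\det L'(y)$. I would order the rows and columns of $L'(y)$ by the cycles of $F_\ell$: within a cycle $\mathbf c$ the cyclic action of $F_\ell$ makes the corresponding diagonal block a near-circulant matrix of size $r_\mathbf c$, while off-diagonal blocks record the $y$-weighted crossings between different $\ell$-cycles. A sequence of row and column reductions along each cycle---``opening'' each $\ell$-cycle into a directed path---decouples the blocks, reducing the determinant to a product over $C(\ell)$. A direct evaluation identifies the block contribution of $\mathbf c$ with $p_{d(\mathbf c)}(y)=(1+y)^{d(\mathbf c)}-(1-y)^{d(\mathbf c)}$ when $d(\mathbf c)>0$: the $d(\mathbf c)$ leading-bit-$1$ states function as sign-change sites in the cyclic transfer around $\mathbf c$, turning the cyclic product of elementary $2\times 2$ transfers into the desired difference. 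For the unique $d=0$ cycle (the all-zero fixed point) the block contributes $p_0(y)=1$.

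The main obstacle is this determinantal computation, and in particular the identification of each block's determinant with $p_{d(\mathbf c)}(y)$: one must track how $y$-weighted edges enter each row reduction along a cycle and argue that the $d(\mathbf c)$ leading-bit-$1$ states produce exactly the sign-flip alternation responsible for $(1+y)^{d(\mathbf c)}-(1-y)^{d(\mathbf c)}$. A secondary technical point is making the ``in-trees as perturbations of Hamiltonian paths'' correspondence precise enough that the weight $y^{\wt(h)}$ decomposes as a product of local weights along each in-tree and that the global normalization is exactly $2^n$.
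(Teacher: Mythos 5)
There are two genuine gaps here. First, your normalization $2^n\,G(\ell;y)=\det L'(y)$ is wrong. Each Hamiltonian path $H$ accounts for $2^{2^{n-1}-1}$ in-trees (one for each subset of the $2^{n-1}-1$ flippable vertices $S(H)$), and each flip multiplies the tree weight by $y^{\pm 1}$; so the weighted in-tree sum attached to $H$ is $(1+y)^{2^{n-1}-1}y^{k}$ with $k$ the distance from $\ell$ to the feedback function of $H$, and the correct identity is $\det L'(y)=(1+y)^{2^{n-1}-1}G(\ell;y)$. The factor $2^n$ enters elsewhere: the cofactor $\det L'$ is converted into the coefficient of $z$ in the full characteristic polynomial $\det(zI-W_{\ell,n})$ (legitimate because all row and column sums of the Laplacian vanish), which introduces the $1/2^n$, and that is the source of the $2^{-n}$ in the final formula. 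Your ``$2^n$ starting positions'' heuristic does not correspond to anything in the combinatorics; already for $n=4$ the two normalizations disagree even at $y=1$, since $2^{2^{n-1}-1}\neq 2^n$.

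Second, and more seriously, your plan for evaluating the determinant in the state basis cannot work as described. The weight-$y$ edge out of a state $x_{n-1}\ldots x_0$ goes to $\overline{x_n}x_{n-1}\ldots x_1$, which in general lies in a \emph{different} $\ell$-cycle from $x$; so the weighted Laplacian is not block-diagonal (nor block-triangular) with respect to the $\ell$-cycles, and no sequence of row/column reductions ``along each cycle'' will decouple it --- the inter-cycle entries are exactly where all of the coupling lives. The missing idea is to pass to the character basis $X_\alpha=\sum_x(-1)^{\sum\alpha_ix_i}[x]$: there $W_{\ell,n}$ acts by $X_\alpha\mapsto(1+(-1)^{\beta_{n-1}}y)X_\beta$, where $\beta$ is the successor of $\alpha$ under the \emph{Galois} stepping associated to $\ell$. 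In that basis $W_{\ell,n}$ really is a weighted permutation matrix, block-diagonal over Galois cycles of characters, and a block of length $r$ with $k$ sign-flip positions has characteristic polynomial $z^r-(1+y)^{r-k}(1-y)^k$, whence the factor $(1+y)^{r-k}p_k(y)$ at $z=1+y$. One then still needs the Galois--Fibonacci cycle correspondence to match these character cycles (and their counts $k$) with the state cycles $C(\ell)$ and their weights $d(\mathbf c)$; this is also the only point where linearity of $\ell$ is used. Without the Fourier step, your ``sign-change sites'' computation is an assertion about a block structure the matrix does not possess.
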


\begin{remark}
 For each linear function $\ell\colon \FF_2^{n-1} \to \FF_2$ this
 theorem gives a refinement of the 1894 result of Flye Sainte-Marie,
 since evaluating the generating function at $y=1$ gives the total
 number of De~Bruijn sequences of order $n$, namely $2^{2^{n-1}-n}$.
\end{remark}

\begin{remark}
  The analogous claim for linear $\ell$ with nonzero
  constant term follows from the fact that
  $G(\ell ; y) = y^{2^{n-1}}
  \cdot G(1+\ell; y^{-1})$.
\end{remark}

\begin{remark}
  Mayhew \cite{mayhew1994, mayhew2000, mayhew2001, mayhew2002},
  Fredricksen \cite{fredricksen}, Hauge and Mykkeltveit \cite{HM1996,
    HM1998} and others have considered the sets $S(0_n; k)$ where
  $0_n$ is the $(n-1)$-bit zero function.  Theorem \ref{thm:main}
  specializes to the following formula for the zero function:
    $$G(0_n; y) = \frac{1}{2^n} \prod_{i} p_i(y)^{e_{n,i}}$$
  where $$e_{n,i} = \sum_{d\mid n} L(d,i)$$ and $L(d,i)$ is the
  number of primitive necklace classes of length $d$ with $i$ ones and
  $d-i$ zeros.
\end{remark}

\begin{remark}
  When a linear function $\ell\colon  \FF_2^{n-1} \to \FF_2$ generates a
  $(2^{n}-1)$-periodic sequence without a run of $n$ zeros
  the theorem gives
 $$G(\ell;y) = \frac{1}{2^n} \left( (1+y)^{2^{n-1}} -
  (1-y)^{2^{n-1}}\right)$$ which was proved previously by Michael
  Fryers in 2015~\cite{fryers}.  Fryers' result can be viewed as a
  generalization of the result of Helleseth and Kl{\o}ve \cite{HK}, which
  computes $N(\ell; 3)$ for any such linear $\ell$.  This case was
  conjectured by the second author upon comparison with random
  permutations \cite{HZ}. Moreover, Fryers' result, combined with
  experimental data for the zero function (see \cite{mayhew1994,
    mayhew2001, mayhew2002}), led to a conjecture for the formula in
  Theorem \ref{thm:main}.
\end{remark}

\begin{remark}
  Recent works have considered modifying linear recursive sequences or
  other understood sequences to produce De Bruijn sequences. See, for
  instance, \cite{LZLH} and \cite{MS}.
\end{remark}

The proof of the theorem has three ingredients.  The first ingredient
is a correspondence between rooted spanning trees in the $n$-bit
De~Bruijn graph and Hamiltonian paths in the same graph.  The basic
construction is in \cite{mowle} and \cite[Chapter VI]{golomb}. We
recall this in Section \ref{sec:in-trees}.  The second ingredient in
our proof is a Matrix Tree Theorem.  Such theorems are a common
ingredient in many of the proofs enumerating De~Bruijn sequences (see,
for instance, \cite{stanley_undergrad}).  Here, we use a weighted
version of the Matrix Tree Theorem, which appears to go back to
Maxwell and Kirchhoff \cite{CK,kirchhoff,maxwell} (see
Section~\ref{sec:matrix_tree_theorem}).  Finally, the computation of
the determinant arising in the Matrix Tree Theorem is established by
moving to the character domain.  The final ingredient results in a
shift from the Fibonacci stepping linear recursion to the
corresponding Galois stepping linear recursion.  This is the only
place in our proof where the linearity of $\ell$ arises. See Section
\ref{sec:galois_stepping} for the details.

\section*{Acknowledgments}
We thank Ron Fertig, Vidya Venkateswaran, and especially Michael
Fryers for helpful conversations.

\section{Hamiltonian Paths and In-Trees}\label{sec:in-trees}
 The $n$-bit De~Bruijn graph, denoted $G_n$, is a 2-in
2-out directed graph with $2^n$ vertices corresponding to elements of
$\FF_2^{n}$ and an edge $x_{n-1}\ldots\allowbreak x_1 x_0 \to x_nx_{n-1}\ldots
x_1$ for all choices of $x_{n}, \ldots, x_1, x_0 \in \FF_2.$ Every
binary De~Bruijn sequence of order $n$ uniquely corresponds to a
Hamiltonian tour through the vertices of $G_n$.  In turn, this tour
gives a unique in-tree with root\break ${\bf 0} = 0\ldots0 \in \FF_2^n$. An
in-tree $T$ of $G_n$ is a subgraph of $G_n$ such that (1) $T$ contains
exactly $2^{n}-1$ edges and (2) every vertex other than ${\bf 0}$ is
connected to ${\bf 0}$ by a directed path in $T$.

For example, the De~Bruijn sequence $\ldots11101000$ of order three 
corresponds to the in-tree
  $$ 100 \rightarrow 010 \rightarrow 101 \rightarrow 110 \rightarrow
111 \rightarrow 011 \rightarrow 001 \rightarrow 000. $$
The 
Hamiltonian tour corresponding to this in-tree is obtained by adding
the edge from $000 \to 100$, thus completing the cycle.  In general,
the in-tree is obtained from the Hamiltonian tour by removing the edge
from ${\bf 0}$ to $10\ldots0$.

The following lemma and theorem show how to obtain all in-trees in
$G_n$ with root ${\bf 0}$ from the De~Bruijn sequences of order $n$.
The approach is contained in \cite{mowle} or \cite[Chatper
  VI]{golomb}, but are proven here to keep the exposition
self-contained.  Let $\cH_n$ be the set of in-trees rooted at ${\bf
  0}$ which are constructed by removing the edge ${\bf 0} \to
10\ldots0$ in a Hamiltonian path of $G_n$.

Given $H\in \cH_n$, an edge $a \to b$ of $G_n$ is \emph{consistent with $H$}
if $H$ visits $a$ before $b$.  For each $H\in \cH_n$ define
$\Lambda(H)$ to be the set of all in-trees of $G_n$ rooted at ${\bf
  0}$ all of whose edges are consistent with $H$.

\begin{theorem}\label{thm:all_intrees}
  Let $\Lambda_n$ be the set of all in-trees rooted at ${\bf 0}$ in
  $G_n$. Then $\Lambda_n = \bigcup_{H \in \cH_n} \Lambda(H)$ and
  $\Lambda(H_1) \cap \Lambda(H_2) = \emptyset$ for all $H_1, H_2 \in
  \cH_n$ whenever $H_1 \ne H_2$.

  Moreover, let
  $$S(H) := \{ Xa \in \FF_2^{n}: Xa \text{\rm\ occurs\ before\ }
  X\overline{a} \text{\rm\ in\ } H \text{\rm\ and\ } X \ne {\bf
    0}\}.$$
Then
  each element of $\Lambda(H)$ is obtained by changing the out-edges of
  a unique subset of vertices in $S(H)$.  Moreover, $\left| S(H)
  \right| = 2^{n-1} -1$ and $\left| \Lambda(H) \right| = 2^{2^{n-1} -
    1}$.
\end{theorem}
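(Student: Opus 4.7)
The plan is to prove the four claims in turn. For $|S(H)| = 2^{n-1}-1$: each prefix $X \in \FF_2^{n-1}$ with $X \ne \mathbf{0}$ yields a twin pair $\{X0, X1\}$, both visited by $H$, exactly one visited first and thus placed in $S(H)$. This gives $2^{n-1}-1$ elements.

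For the parametrization of $\Lambda(H)$ by subsets of $S(H)$, I would analyze each pair of twin vertices $\{v, v'\}$ together. They share the common out-neighbors $\{0Y, 1Y\}$ where $Y$ is the common first $n-1$ bits, and $H$ uses complementary out-edges $v \to aY$ and $v' \to \overline{a}Y$. Since $\overline{a}Y$ is the immediate $H$-successor of $v'$, the non-$H$ out-edge $v \to \overline{a}Y$ is consistent with $H$ iff $v$ precedes $v'$ in $H$, i.e., iff $v \in S(H)$ (the degenerate self-loop case $v = 1^n$ is easily verified to fit the same dichotomy). Thus each $T \in \Lambda(H)$ is obtained by choosing a subset $A \subseteq S(H)$ at which to flip from $H$'s out-edge to the opposite one; the resulting $T_A$ has one out-edge per non-root vertex and all edges respect the linear order of $H$, forcing acyclicity and hence the in-tree property. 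So $|\Lambda(H)| = 2^{|S(H)|} = 2^{2^{n-1}-1}$.

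For the partition $\Lambda_n = \bigcup_{H \in \cH_n} \Lambda(H)$ with pairwise disjoint parts, I would construct an inverse $\rho\colon \Lambda_n \to \cH_n$ algorithmically. Extend $T$ by a virtual edge $\mathbf{0} \to 10\cdots 0$; starting at $v_0 = 10\cdots 0$, at the current vertex $v \ne \mathbf{0}$ with twin $v'$, follow $T$'s out-edge at $v$ if $v'$ has already been visited, and otherwise follow the out-edge at $v$ whose target is opposite to that of $T$'s out-edge at $v'$. Whenever $T$ already lies in some $\Lambda(H)$, this rule reproduces $H$: in the twin-visited case, $v \notin S(H)$ forces the $T$-out-edge at $v$ to agree with $H$'s; in the twin-unvisited case, $v' \notin S(H)$ forces $T$'s out-edge at $v'$ to be $H$'s out-edge at $v'$, whose target is opposite to that of $H$'s out-edge at $v$. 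Hence $\rho(T) = H$ is uniquely determined by $T$, giving disjointness.

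The main obstacle is proving that this algorithm always terminates with a Hamiltonian path from $10\cdots 0$ to $\mathbf{0}$ for every $T \in \Lambda_n$, not only those known a priori to lie in some $\Lambda(H)$. In principle the rule could revisit a vertex and cycle indefinitely. I would pass to the graph $G_{n-1}$ (so that vertices of $G_n$ become edges of $G_{n-1}$) and observe that the algorithm automatically produces bijective pairings between in-edges and out-edges at each vertex of $G_{n-1}$: the first-visited in-edge takes the target opposite to the second's $T$-target, and the second takes its own $T$-target. Such pairings decompose the edge set of $G_{n-1}$ into closed trails, and the key remaining task is to show that the in-tree structure of $T$ forces a single trail covering every edge, which then gives the desired Hamiltonian tour in $G_n$. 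Once this is in place, $\rho$ is well-defined and both covering and disjointness are secured.
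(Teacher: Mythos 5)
Your treatment of $|S(H)|$, of the parametrization of $\Lambda(H)$ by subsets of $S(H)$, and of disjointness is sound, and in fact the twin-pair analysis showing that the non-$H$ out-edge at $v$ is consistent with $H$ exactly when $v\in S(H)$ gives you the equality $|\Lambda(H)|=2^{2^{n-1}-1}$ directly, which is sharper than what the paper's corresponding lemmas establish on their own (the paper only shows the containment $\Omega(H)\subseteq\Lambda(H)$ at that stage). Your algorithmic map $\rho$ is also a legitimate, genuinely different route to disjointness from the paper's argument, which instead locates the first vertex where two Hamiltonian paths diverge and derives a contradiction from a forced edge.

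However, there is a genuine gap, and you have correctly identified where it is: the covering claim $\Lambda_n=\bigcup_{H\in\cH_n}\Lambda(H)$ is not proved. Your proposal reduces it to showing that, for an arbitrary in-tree $T\in\Lambda_n$, the transition system your algorithm induces on $G_{n-1}$ yields a \emph{single} closed trail rather than several; but that is precisely the hard content of the statement (it is essentially a BEST-theorem-type correspondence between in-trees of $G_n$ and Eulerian circuits of $G_{n-1}$), and the proposal stops at naming it as ``the key remaining task.'' The paper avoids this entirely by counting: it combines the disjointness of the sets $\Omega(H)$, the bound $|\Lambda(H)|\ge|\Omega(H)|=2^{2^{n-1}-1}$, and the two known cardinalities $|\Lambda_n|=2^{2^n-n-1}$ and $|\cH_n|=2^{2^{n-1}-n}$ to force $\sum_{H}|\Omega(H)|=|\Lambda_n|$, whence the union is all of $\Lambda_n$. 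Note that the pieces you have already proved correctly (exact value of $|\Lambda(H)|$ plus pairwise disjointness) would combine with those same two known counts to close the gap immediately; as written, though, your constructive route leaves the central assertion of the theorem unestablished.
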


Theorem \ref{thm:all_intrees} will be established via a counting
argument.  However, before turning to the proof we discuss the set of
graphs $\Omega(H)$ for $H\in \cH_n$ which are obtained by modifying
the out-edge of any subset of elements of $S(H)$.  Theorem~\ref{thm:all_intrees} claims
that each element of $\Omega(H)$ is an in-tree consistent with
$H$; this is established in the following lemma.

\begin{lemma}\label{lem:Omega(H)}
Let $H \in \cH_n$ and let $S(H)$ and $\Omega(H)$ be defined as above.
Then each $T \in \Omega(H)$ is an in-tree consistent with $H$.
\end{lemma}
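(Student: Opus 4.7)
The plan is to reduce the lemma to a statement about edges being consistent with $H$. The structural part is immediate: since swapping out-edges does not change out-degrees, any $T \in \Omega(H)$ has ${\bf 0}$ as the unique vertex of out-degree $0$, every other vertex of out-degree $1$, and $2^n - 1$ edges in total. A functional graph on a finite vertex set with a unique sink ${\bf 0}$ is an in-tree rooted at ${\bf 0}$ as soon as it is acyclic, so the task reduces to verifying that every edge of $T$ is consistent with $H$: any directed cycle would then force a vertex to precede itself in the linear order of visits along $H$, which is impossible.

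The edges of $T$ inherited from $H$ itself are trivially consistent with $H$, so the only edges I need to check are the modified ones. Fix $Xa \in S(H)$ and examine its alternative out-edge. Both $Xa$ and its sibling $X\overline{a}$ have out-neighbors exactly $\{0X, 1X\}$ in $G_n$. Write $Xa \to bX$ for the edge present in $H$, so the modified edge is $Xa \to \overline{b}X$. The crucial step is to pin down the out-edge of $X\overline{a}$ in $H$: since $H$ together with the removed edge ${\bf 0} \to 10\cdots 0$ forms a Hamiltonian cycle, each of $0X$ and $1X$ has in-degree exactly one in the cycle, which forces the two siblings $Xa$ and $X\overline{a}$ to point to distinct targets. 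Hence $X\overline{a} \to \overline{b}X$ in $H$; the hypothesis $X \ne {\bf 0}$ guarantees $X\overline{a} \ne {\bf 0}$ and ensures this edge is really in $H$ (rather than being the removed edge).

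Consistency of $Xa \to \overline{b}X$ then follows by chaining two order relations along the Hamiltonian tour: the defining condition $Xa \in S(H)$ places $Xa$ before $X\overline{a}$, and the edge $X\overline{a} \to \overline{b}X$ of $H$ places $\overline{b}X$ immediately after $X\overline{a}$ in the tour. Composing these gives $Xa$ before $\overline{b}X$, which is the desired consistency. I expect the sibling argument of the previous paragraph to be the main point requiring care --- identifying that the $H$-edges out of $Xa$ and $X\overline{a}$ together cover both common out-neighbors $0X$ and $1X$ --- but this is a direct consequence of $H$ being (almost) a Hamiltonian cycle. Everything else is routine bookkeeping on in-trees and on the linear order induced by $H$.
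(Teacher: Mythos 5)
Your proof is correct and follows essentially the same route as the paper's: the heart of both arguments is that a modified edge $Xa \to \overline{b}X$ is consistent because $Xa$ precedes $X\overline{a}$ (by definition of $S(H)$) and $\overline{b}X$ immediately follows $X\overline{a}$ along $H$. If anything, your version is slightly tighter than the paper's, which asserts acyclicity directly ("no loops can exist in $T$") rather than deducing it from edge-consistency as you do, and which leaves the sibling/in-degree observation implicit.
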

\begin{proof}
  Let $T$ be obtained from $H\in \cH_n$ by modifying the out-edges
  from the states in $S \subset S(H)$. It is easy to see that every
  node of $G_n$ has a path in $T$ to ${\bf 0}$ because it is true for
  $H$. Moreover, no loops can exist in $T$. Thus $T$ is an in-tree. 

  To see that $T$ is consistent with $H$, consider an edge in $T$, say
  $Xa\to bX$. If $Xa \not\in S$, then $Xa \to bX$ is in $H$ and is
  thus consistent with $H$.  If $Xa \in S$, then $Xa \to
  \overline{b}X$ and $X\overline{a} \to bX$ are edges in $H$, but $Xa$
  appears before $X\overline{a}$, thus the edge $Xa \to bX$ is
  consistent with $H$.  Therefore, each element of $\Omega(H)$ is an
  in-tree consistent with $H$.
\end{proof}

The following lemmas are useful. 
\begin{lemma}\label{lem:fixed_edge}
  Let $H \in \cH_n$. Suppose $X \ne {\bf 0}$ and $Xa \in S(H)$ with
  $Xa \to cX$ in $H$.  For every $T\in \Omega(H)$ the edge
  $X\overline{a} \to \overline{c}X$ is in $T$ and $H$.
\end{lemma}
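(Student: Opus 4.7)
The plan is to exploit the rigid 2-in/2-out local structure of $G_n$ around the four vertices $Xa$, $X\overline{a}$, $cX$, $\overline{c}X$, together with the fact that every $H \in \cH_n$ is a Hamiltonian path from $10\ldots 0$ to ${\bf 0}$, so that every vertex of $H$ other than ${\bf 0}$ has out-degree $1$ and every vertex other than $10\ldots 0$ has in-degree $1$.

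First I would observe that in $G_n$ the common out-neighbor set of $Xa$ and $X\overline{a}$ is exactly $\{cX, \overline{c}X\}$, and the common in-neighbor set of $cX$ and $\overline{c}X$ is exactly $\{Xa, X\overline{a}\}$. The hypothesis $X \ne {\bf 0}$ (interpreted as the all-zero $(n-1)$-bit string) ensures that $Xa, X\overline{a} \ne {\bf 0}$, so each has out-degree $1$ in $H$, and that $cX, \overline{c}X \ne 10\ldots 0$, so each has in-degree $1$ in $H$. With the given edge $Xa \to cX$ in $H$, the unique in-edge of $cX$ is already accounted for, so $X\overline{a} \to cX$ cannot be in $H$; the unique out-edge of $X\overline{a}$ in $H$ must therefore be $X\overline{a} \to \overline{c}X$.

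Finally, to promote this edge from $H$ to an arbitrary $T \in \Omega(H)$, I would invoke the defining ``first-visited'' condition of $S(H)$: since $Xa \in S(H)$ means $Xa$ appears before $X\overline{a}$ in $H$, the vertex $X\overline{a}$ cannot itself lie in $S(H)$. As $T$ is obtained from $H$ by toggling the out-edges of some subset of $S(H)$ only, the out-edge of $X\overline{a}$ is untouched in $T$, and hence $X\overline{a} \to \overline{c}X$ lies in $T$ as well. There is no substantive obstacle here: the entire argument is bookkeeping about the local ``diamond'' $\{Xa, X\overline{a}\} \to \{cX, \overline{c}X\}$, and one only has to verify that the hypothesis $X \ne {\bf 0}$ keeps all of the relevant in- and out-degrees in $H$ equal to $1$ and that the ``first-visited'' rule locks the partner $X\overline{a}$ out of $S(H)$.
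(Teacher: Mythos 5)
Your proof is correct and follows essentially the same route as the paper's (much terser) argument: the 2-in/2-out diamond structure forces $X\overline{a}\to\overline{c}X$ in $H$, and since $Xa\in S(H)$ excludes $X\overline{a}$ from $S(H)$, that out-edge is never toggled in forming any $T\in\Omega(H)$. You have merely made explicit the degree bookkeeping and the role of the hypothesis $X\ne{\bf 0}$ that the paper leaves implicit.
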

\begin{proof}
  Since $Xa \to cX$ is in $H$, so is $X\overline{a} \to
  \overline{c}X$.  Since $Xa \in S(H)$ every element of $T\in
  \Omega(H)$ must have the edge $X\overline{a} \to \overline{c}X$ and
  one of $Xa \to cX$ or $Xa \to \overline{c}X$.
\end{proof}

\begin{lemma}\label{lem:Omega_distinct}
  Let $H_1, H_2 \in \cH_n$.  Then $\Omega(H_1) \cap \Omega(H_2) = \emptyset$. 
\end{lemma}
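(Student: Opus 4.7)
The plan is to argue by contradiction. Suppose $T \in \Omega(H_1) \cap \Omega(H_2)$ for distinct $H_1, H_2 \in \cH_n$; I will then produce a vertex at which the two recipes for $T$ conflict. The structural fact that drives the whole argument is that in any Hamiltonian tour $H$ of $G_n$, the vertex $0X$ has exactly two predecessors $X0, X1$ and is visited exactly once, so $\{H(X0), H(X1)\} = \{0X, 1X\}$. Consequently, if $H_1$ and $H_2$ disagree at some vertex $V = Xa$ (with $X \ne 0^{n-1}$) then they automatically also disagree at the paired vertex $V' = X\overline{a}$: disagreements come in pairs.

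Each $H_j$ corresponds to a Hamiltonian tour $u_0^{(j)} = 0^n,\, u_1^{(j)} = 10^{n-1},\, u_2^{(j)}, \ldots$ (restoring the removed edge $0^n \to 10^{n-1}$). Let $i \ge 2$ be the smallest index with $u_i^{(1)} \ne u_i^{(2)}$ and set $V := u_{i-1}^{(1)} = u_{i-1}^{(2)}$; by construction $H_1(V) \ne H_2(V)$. Because the out-edge of $0^{n-1}1$ is forced to be $0^n$ in any Hamiltonian tour (the self-loop at $0^n$ being inadmissible), $V \ne 0^{n-1}1$, so its pair $V' = X\overline{a}$ is well-defined, and the structural observation above gives $H_1(V') \ne H_2(V')$. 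Now I use minimality of $i$ to show $V'$ is visited strictly after $V$ in both tours: if $V'$ appeared at some position $k \le i - 2$, then $u_k^{(1)} = u_k^{(2)} = V'$ and hence $u_{k+1}^{(1)} = H_1(V') \ne H_2(V') = u_{k+1}^{(2)}$ with $k+1 \le i-1$, contradicting the choice of $i$. Therefore $V \in S(H_1) \cap S(H_2)$ while $V' \notin S(H_1) \cup S(H_2)$.

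The contradiction follows immediately: since $V' \notin S(H_j)$, the definition of $\Omega(H_j)$ forces $T(V') = H_j(V')$ for both $j = 1, 2$, so $H_1(V') = H_2(V')$, contradicting the paired disagreement established above. The only nonroutine step is the pairing observation in the first paragraph -- that a single disagreement at the first-divergence vertex $V$ automatically propagates to its partner $V'$; once this is in hand, the first-divergence bookkeeping and the definition of $S(H)$ deliver the incompatibility of $T$ with both $\Omega(H_j)$ with no further work.
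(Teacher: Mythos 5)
Your proof is correct and follows essentially the same route as the paper's: locate the first vertex $V=Xa$ where the two tours diverge, observe that $V\in S(H_1)\cap S(H_2)$ while the out-edge of the partner $V'=X\overline{a}$ is frozen in both $\Omega(H_1)$ and $\Omega(H_2)$ but to different targets (the paper packages this freezing as Lemma \ref{lem:fixed_edge}, which you inline via the pairing observation). Your write-up is somewhat more explicit than the paper's about why $V$ precedes $V'$ in both tours and about excluding the degenerate vertices near $\mathbf{0}$, but the argument is the same.
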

\begin{proof}
  Suppose $H_1 \ne H_2$ are elements of $\cH_n$. Then
  $H_1 : 10\ldots00 \to \cdots \to {\bf 0}$ and $H_2 : 10\ldots00 \to
  \cdots \to {\bf 0}$.  Let $X \in \FF_2^{n-1}$ be the first $X$ such
  that for some $a$, $Xa \to cX$ is in $H_1$ and
  $Xa \to\overline{c}X$ is in $H_2$.  That is, $X$ is the input to the
  function just before the first time the two cycles diverge.  
  Then $Xa \in S(H_1) \cap S(H_2)$.
  Suppose $T \in \Omega(H_1)$.   Then by Lemma \ref{lem:fixed_edge}, 
  $X\overline{a} \to \overline{c}X$ is in $T$.  If $T$ is also in
  $\Omega(H_2)$ then Lemma \ref{lem:fixed_edge} gives that $X\overline{a}
  \to cX$ must be in $T$, which is a contradiction.
\end{proof}

We now turn to the proof of Theorem \ref{thm:all_intrees}.
\begin{proof}[Proof of Theorem \ref{thm:all_intrees}]
Clearly, $\left| S(H) \right| = 2^{n-1} -1$, because
either $X0$ or $X1$, but not both, are in $S(H)$ for each $X \in
\FF_2^{n-1} \setminus {\bf 0}$.  Therefore, $\left| \Omega(H) \right|
= 2^{2^{n-1}-1}$. Hence, $\left| \Lambda(H) \right| \ge \left|
\Omega(H) \right| = 2^{2^{n-1}-1}.$

By Lemma \ref{lem:Omega(H)}, $\Lambda_n \supseteq \bigcup_{H\in \cH_n} \Omega(H)$.
Moreover, by 
Lemma \ref{lem:Omega_distinct} and the above calculation,  
$$ \left| \Lambda_n \right| \ge \sum_{H \in \cH_n} \left| \Omega(H)
\right | = 2^{2^{n-1} - 1} \cdot \left|\cH_n \right|.$$ It is well
known \cite[Chapter 10]{stanley_undergrad}, that
$\left|\Lambda_n\right| = 2^{2^{n}-n-1}$ and $\left| \cH_{n} \right| =
2^{2^{n-1}-n}$ for all $n\ge 1$.  Therefore, we must have $$\Lambda_n
= \bigcup_{H\in \cH_n} \Omega(H).$$ The proof follows from the fact
that $\Omega(H) \subseteq \Lambda(H)$.
\end{proof}

The following weighted version of the De~Bruijn graph is used in
Section~\ref{sec:matrix_tree_theorem}.

\begin{theorem}
Fix $f\colon \mathbb{F}^{n-1}_2\rightarrow \mathbb{F}_2$.  
  Label the edge $x_{n-1} \ldots\allowbreak x_1 x_0 \to x_n x_{n-1} \ldots x_1$ of
$G_n$ by $1$ if $x_n = x_0 + f(x_{n-1}, \ldots, x_1)$ and by $y$
otherwise.  Denote this weighted graph by $G_{n,f}$. For any in-tree
$T \in \Lambda_n$ define the \emph{weight of $T$}, denoted by
$y^{\wt(f;T)}$, to be the product of the weights of edges.
Then
  $$(1+y)^{2^{n-1} -1} \cdot G(f;y) = \sum_{T \in \Lambda_n}
y^{\wt(f;T)}$$
where $G(f;y)$ is defined in
  \eqref{eqn:def_gen_func}.
\end{theorem}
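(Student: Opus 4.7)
My plan is to apply Theorem~\ref{thm:all_intrees} to split the right-hand sum as
$$\sum_{T\in \Lambda_n} y^{\wt(f;T)} = \sum_{H\in \cH_n} \sum_{T\in \Lambda(H)} y^{\wt(f;T)},$$
and then to evaluate each inner sum by using the fact (also from Theorem~\ref{thm:all_intrees}) that elements of $\Lambda(H)$ are exactly those graphs obtained from $H$ by flipping the out-edges of an arbitrary subset of $S(H)$.  Because each edge weight depends only on the out-edge chosen at its source, the inner sum will factor as a product over the source vertices $v\ne {\bf 0}$.

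Fix $H\in \cH_n$ and let $g\in S(n)$ be the De Bruijn function whose Hamiltonian cycle yields $H$ after removing the edge ${\bf 0}\to 10\cdots 0$.  For each input $X\in \FF_2^{n-1}$ with $X\ne 0$, exactly one of $X0,X1$ lies in $S(H)$; call it $Xa$.  I would observe that the forced vertex $X\overline{a}$ keeps its $H$-edge and contributes weight $y^{[g(X)\ne f(X)]}$, while the free vertex $Xa$ has two possible out-edges whose weights sum to $1+y$ (the two edges leaving $Xa$ always carry labels $1$ and $y$ in some order, independently of $g$ and $f$).  Hence each such pair contributes $(1+y)\,y^{[g(X)\ne f(X)]}$.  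The remaining pair $\{{\bf 0},\,0\cdots 0\,1\}$ contributes only through $0\cdots 0\,1$, and here I would invoke the observation that $g(0)=1$ for every $g\in S(n)$---forced because the removed edge out of ${\bf 0}$ must go to $10\cdots 0$ rather than form a self-loop---so this vertex again contributes $y^{[g(0)\ne f(0)]}$, matching the generic pattern.

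Multiplying the $2^{n-1}-1$ free factors and the $2^{n-1}$ forced factors will give
$$\sum_{T\in \Lambda(H)} y^{\wt(f;T)} = (1+y)^{2^{n-1}-1}\, y^{\wt(g+f)},$$
and summing over $H\in \cH_n$, equivalently over $g\in S(n)$, yields $(1+y)^{2^{n-1}-1}\,G(f;y)$ by the definition of $G$.  The step that demands the most care is the factorization bookkeeping: checking that each pair $\{X0,X1\}$ with $X\ne 0$ decouples cleanly into a $(1+y)$ from the free vertex and a single $y^{[g(X)\ne f(X)]}$ from the forced vertex (rather than, say, $y^{2[g(X)\ne f(X)]}$), and that the special pair at $X=0$ fits the same template thanks to $g(0)=1$.
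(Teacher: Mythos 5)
Your proposal is correct and follows essentially the same route as the paper: decompose $\Lambda_n$ over $H\in\cH_n$ via Theorem~\ref{thm:all_intrees}, factor the inner sum over $\Lambda(H)$ into a $(1+y)$ contribution from each free vertex of $S(H)$ and a $y^{[g(X)\ne f(X)]}$ contribution from each forced vertex, and identify the resulting exponent with $\wt(g+f)$. Your treatment is in fact slightly more careful than the paper's (which compresses the factorization into the remark that each modification multiplies or divides the weight by $y$, so the inner sum is $(1+y)^{2^{n-1}-1}$ times the minimum weight), notably in checking the special pair at $X=0$ via $g(0)=1$.
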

\begin{proof}
  By Theorem \ref{thm:all_intrees} we have
  $$\sum_{T \in \Lambda_n} y^{\wt(f;T)} = \sum_{H\in \cH_n} \sum_{S
    \subset S(H)} y^{\wt(f;T_{S,H})}$$ where $T_{S,H}$ is the tree in
  $\Lambda(H)$ generated by modifying the out-edges of the set $S$, as
  described in Theorem \ref{thm:all_intrees}.
  Since modifying
  the out-edge from each element of $S(H)$ either increases or decreases
  the weight of the tree by a single $y$, we see that
$\sum_{S \subset S(H)} y^{\wt(f;T_{S,H})} = (1+y)^{2^{n-1}-1} \cdot
  y^{\text{minimum } \wt}$,
where we have used $\left| S(H) \right| =
  2^{n-1}-1$.  Finally, the minimum weight is clearly the number of
  places that the feedback function that generates the De~Bruijn
  sequence represented by $H$ differs from the function $f$. 
\end{proof}

\section{ The Matrix Tree Theorem}\label{sec:matrix_tree_theorem}

The following Matrix Tree Theorem was 
proved by Kirchhoff \cite{kirchhoff} and stated by Maxwell
\cite{maxwell}; see also Chajeken and Kleitman \cite{CK}, and the
references therein.

\begin{theorem}\label{thm:mtt}
  Let $G$ have vertices $v_1,\ldots, v_n$. Suppose that an edge from
  $v_i \to v_j$ is given a weight $-M_{i,j}$, and choose $M_{j,j}$ so
  that $\sum_{i} M_{i,j} = 0$ for all $j$.  Then the sum over all
  in-trees with root $v_1$ of the product of all weights assigned to
  the edges of the in-tree is the determinant of the matrix obtained
  by omitting the row and column corresponding to $v_1$ of the matrix
  $M=\left(M_{i,j}\right)$.
\end{theorem}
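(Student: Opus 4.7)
The plan is to expand the determinant as a sum over permutations, identify each resulting monomial with a subgraph of $G$ in which each non-root vertex has a distinguished incident edge, and use a sign-reversing argument to cancel the configurations that contain a directed cycle.

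Let $\hat M$ denote $M$ with the row and column indexed by $v_1$ deleted, and expand
\[
\det \hat M \;=\; \sum_{\sigma} \mathrm{sgn}(\sigma) \prod_{j=2}^{n} \hat M_{\sigma(j), j},
\]
where $\sigma$ ranges over permutations of $\{2, \ldots, n\}$. For each fixed point $j$ of $\sigma$, use the zero column-sum relation to replace $M_{j,j}$ by $\sum_{i\ne j} w(v_i \to v_j)$. Each resulting monomial is then indexed by a function $\phi\colon \{2, \ldots, n\} \to \{1, \ldots, n\}$ with $\phi(j) \ne j$, defined by $\phi(j) = \sigma(j)$ when $\sigma(j)\ne j$ and $\phi(j) = i$ when $\sigma(j)=j$ and the summand $w(v_i \to v_j)$ is chosen. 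The associated edge set $\{v_{\phi(j)}\to v_j\}_{j \ne 1}$ selects one edge adjacent to each non-root vertex and decomposes topologically into an acyclic piece attached to $v_1$ together with a (possibly empty) collection of vertex-disjoint directed cycles avoiding $v_1$.

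Next, reindex the sum by grouping terms with a common $\phi$. One checks that $\sigma$ is compatible with $\phi$ exactly when $\sigma$ agrees with $\phi$ on a union $B$ of directed cycles of $\phi$ and is the identity elsewhere. Using $\mathrm{sgn}(\sigma) = \prod_{C \subseteq B}(-1)^{|C|-1}$ together with the factor $(-1)^{|B|}$ produced by the minus signs on the off-diagonal entries along the cycles, the total sign collapses to $(-1)^{|\mathcal{S}|}$, where $\mathcal{S}$ is the selected set of cycles. Hence the aggregate contribution of a fixed $\phi$ equals
\[
w(\phi) \cdot \sum_{\mathcal{S} \subseteq \mathcal{C}(\phi)} (-1)^{|\mathcal{S}|}, \qquad w(\phi) := \prod_{j \ne 1} w(v_{\phi(j)} \to v_j),
\]
where $\mathcal{C}(\phi)$ is the set of directed cycles of $\phi$. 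The alternating sum vanishes whenever $\mathcal{C}(\phi)$ is nonempty, and equals $1$ when $\mathcal{C}(\phi) = \emptyset$; in the latter case $\phi$ encodes an in-tree in $G$ rooted at $v_1$, and $w(\phi)$ is the product of its edge weights. Summing over $\phi$ yields $\det \hat M = \sum_T w(T)$, as required.

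The main obstacle is the sign bookkeeping that produces the clean $(-1)^{|\mathcal{S}|}$ collapse: one must carefully match the cycle-length parities in $\mathrm{sgn}(\sigma)$ against the $(-1)$'s inherited from the off-diagonal entries along each cycle. Once that matching is established, the proof concludes by the inclusion--exclusion over cycle subsets described above.
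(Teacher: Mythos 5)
The paper does not actually prove this theorem: it is quoted from Kirchhoff, Maxwell, and Chaiken--Kleitman, so there is no internal argument to compare yours against. Your proof is the standard combinatorial one --- Leibniz expansion of the minor, reinterpretation of each monomial as a functional digraph on the non-root vertices, and cancellation of the cyclic configurations by inclusion--exclusion over subsets of cycles --- and the sign bookkeeping is correct: a selected cycle $C$ contributes $(-1)^{|C|-1}$ from $\mathrm{sgn}(\sigma)$ and $(-1)^{|C|}$ from the off-diagonal entries it uses, so each cycle carries a net $-1$ and the alternating sum over $\mathcal{S}\subseteq\mathcal{C}(\phi)$ annihilates every $\phi$ possessing a cycle.

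The one substantive issue is the orientation of the surviving trees. Because you normalize with the \emph{column} sums, $M_{j,j}=\sum_{i\ne j}w(v_i\to v_j)$ is the total weight \emph{into} $v_j$, and your map $\phi$ hands each non-root vertex a distinguished \emph{in}-edge $v_{\phi(j)}\to v_j$. An acyclic such configuration is a spanning arborescence directed \emph{away} from $v_1$ (every vertex is reachable from $v_1$ by following the reversed $\phi$-chains), i.e.\ an out-tree --- not an in-tree in the sense of Section 2, where every non-root vertex has one \emph{out}-edge and a directed path \emph{to} the root. A two-vertex graph with the single edge $v_2\to v_1$ of weight $w$ makes the discrepancy concrete: the column-sum condition forces $M_{2,2}=0$, so the minor is $0$, while the in-tree sum is $w$ (the out-tree sum is indeed $0$). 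To obtain in-trees you must instead make the \emph{row} sums $\sum_j M_{i,j}$ vanish, so that $M_{i,i}$ is the total out-weight of $v_i$ and each non-root vertex selects an out-edge. This slip is inherited from the statement as printed, and it is harmless in the paper's application because every vertex of $G_{n,f}$ has both in-weight and out-weight $1+y$, making the two normalizations (and hence the two minors) coincide; but your write-up should either switch to row sums or state explicitly that you are proving the transposed (out-tree) version.
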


In practice, the following lemma is often used with Theorem \ref{thm:mtt}.

\begin{lemma}[\protect{\cite[Lemma 9.9]{stanley_undergrad}}]\label{lem:char_poly}
  Let $M$ be a $p \times p$ matrix such that the sum of the entries in
 every row and column is $0$.  Let $M_0$ be the matrix obtained from $M$
 by removing the first row and first column.  Then the coefficient of
 $z$ in the characteristic polynomial $\det( M - z\cdot I)$ (with $I$
 the identity matrix) of $M$ is equal to $-p \cdot \det(M_0)$.
\end{lemma}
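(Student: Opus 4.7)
The plan is to prove the lemma in two stages: first, expand the characteristic polynomial by multilinearity to express the coefficient of $z$ as a sum of diagonal $(p-1) \times (p-1)$ principal minors; second, use the row/column sum hypothesis to show that all of those minors are equal to $\det(M_0)$.

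For the first stage I would view $M - z I$ as $M$ with $-z$ added along the diagonal and expand the determinant by multilinearity, obtaining
$$\det(M - z I) = \sum_{S \subseteq \{1,\ldots,p\}} (-z)^{|S|} \det M[\hat S,\hat S],$$
where $M[\hat S,\hat S]$ denotes the principal submatrix on the rows and columns outside $S$. Reading off the $|S|=1$ terms immediately gives that the coefficient of $z$ equals $-\sum_{i=1}^{p} \det M[\hat i,\hat i]$. This step is routine bookkeeping and should not be the obstacle.

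The substantive step is to prove that every diagonal $(p-1)\times(p-1)$ principal minor equals $\det(M_0)$. I plan to argue via the adjugate. The hypotheses give $M\mathbf{1}=0$ and $\mathbf{1}^{T}M=0$, where $\mathbf{1}$ is the all-ones column. In the generic case $\mathrm{rank}(M)=p-1$, both $\ker M$ and $\ker M^{T}$ are one-dimensional and spanned by $\mathbf{1}$. The identity $M\cdot\mathrm{adj}(M)=\det(M)\,I=0$ forces each column of $\mathrm{adj}(M)$ into $\ker M$, hence to be a scalar multiple of $\mathbf{1}$ (so its entries are constant in the row index); similarly, $\mathrm{adj}(M)\cdot M=0$ forces each row of $\mathrm{adj}(M)$ to be a multiple of $\mathbf{1}^{T}$. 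Combining, every entry of $\mathrm{adj}(M)$ equals one common constant $c$, and the cofactor identity $(\mathrm{adj}(M))_{ii} = \det M[\hat i,\hat i]$ shows that every diagonal principal minor equals $c=\det(M_0)$. Summing over $i$ then yields coefficient of $z$ equal to $-p\cdot\det(M_0)$. The degenerate case $\mathrm{rank}(M)\le p-2$ is handled separately: all $(p-1)\times(p-1)$ minors vanish, and $0$ is an eigenvalue of $M$ of multiplicity at least two, so $z^{2}$ divides $\det(M-z I)$ and both sides of the claimed identity are zero. The main obstacle is the adjugate step; once the left and right null spaces of $M$ are identified as spanned by $\mathbf{1}$, the equality of diagonal minors follows from the standard adjugate identities.
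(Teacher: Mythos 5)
Your proof is correct. There is nothing in the paper to compare it against: the lemma is imported verbatim from Stanley's \emph{Algebraic Combinatorics} (Lemma 9.9) with no in-paper proof. Your argument is nonetheless essentially the standard one. The first stage (multilinear expansion of $\det(M-zI)$, giving the coefficient of $z$ as $-\sum_{i=1}^{p}\det M[\hat i,\hat i]$) is exactly the opening step of Stanley's proof, and the only substantive content is that all diagonal $(p-1)\times(p-1)$ minors coincide. Where Stanley derives this by elementary row and column operations (using the zero-sum hypotheses to rewrite the deleted row and column in terms of the remaining ones), you get it from the adjugate: since $\ker M$ and $\ker M^{T}$ are both spanned by $\mathbf{1}$ when ${\rm rank}(M)=p-1$, the identities $M\,{\rm adj}(M)={\rm adj}(M)\,M=0$ force ${\rm adj}(M)$ to be a constant matrix, and its diagonal entries are the principal minors. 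This is sound, including the degenerate case ${\rm rank}(M)\le p-2$, where all $(p-1)$-minors vanish and both sides of the identity are $0$. The only trade-off worth noting is that your route needs the rank dichotomy and works over a field (which suffices here, since the application is to matrices over $\mathbb{Q}(y)$), whereas the row-operation argument is a case-free identity valid over any commutative ring.
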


For any $f\colon \FF_2^{n-1} \to \FF_2$ let $G_{n,f}$ be the weighted
De~Bruijn graph defined in Section \ref{sec:in-trees}.  Denote the
weighted adjacency matrix by $W_{f,n}$.  So $W_{f,n}$ is the $2^n
\times 2^n$
matrix with a $1$ in the row and column corresponding to
$x_{n-1}\ldots x_1 x_0 \to x_{n} x_{n-1} \ldots x_1$ if $x_n = x_0 +
f(x_{n-1}, \ldots, x_1)$ and a $y$ otherwise.  For example, with
$f(x_2, x_1) = 0$ the weighted adjacency matrix is 
$$W_{1, 3} = \left(\begin{smallmatrix}
    1 & 0 & 0 & 0 & y & 0 & 0 & 0 \\
    y & 0 & 0 & 0 & 1 & 0 & 0 & 0\\
    0 & 1 & 0 & 0 & 0 & y & 0 & 0 \\
    0 & y & 0 & 0 & 0 & 1 & 0 & 0 \\
    0 & 0 & 1 & 0 & 0 & 0 & y & 0 \\
    0 & 0 & y & 0 & 0 & 0 & 1 & 0 \\
    0 & 0 & 0 & 1 & 0 & 0 & 0 & y \\
    0 & 0 & 0 & y & 0 & 0 & 0 & 1 
\end{smallmatrix}\right).$$

Combining Theorems \ref{thm:all_intrees} and \ref{thm:mtt} and Lemma
\ref{lem:char_poly} one obtains the following: 
\begin{proposition}\label{prop:mtt_evaluation}
  Let $f\colon \FF_2^{n-1} \to \FF_2$.  With $G(f;y)$ defined as in
  \eqref{eqn:def_gen_func} and $W_{f,n}$ the adjacency matrix of the
  weighted De~Bruijn graph,
   $$(1+y)^{2^{n-1} -1} \cdot G(f;y) = \left( \frac{1}{2^n (z -
    (1+y))} \cdot \det(z\cdot I - W_{f,n}) \right) \bigg\vert_{z =
    (1+y)}.$$
\end{proposition}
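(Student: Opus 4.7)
The plan is to combine the three ingredients (the weighted version of Theorem 2.1, the Matrix Tree Theorem, and Lemma 3.2) in a chain, with a substitution in the last step to convert the characteristic polynomial of a Laplacian-type matrix into the characteristic polynomial of $W_{f,n}$ itself.

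The first step is to apply the previous theorem to rewrite the left-hand side: $(1+y)^{2^{n-1}-1} G(f;y) = \sum_{T \in \Lambda_n} y^{\wt(f;T)}$. This recasts our target quantity as the sum over in-trees rooted at $\mathbf{0}$ of the product of edge weights in $G_{n,f}$, which is exactly the sort of sum that the Matrix Tree Theorem evaluates. To set up Theorem 3.1, I would take $M := (1+y)I - W_{f,n}$. The key observation to check is that because $G_n$ is $2$-in $2$-out and because for each vertex exactly one of its two in-edges (and one of its two out-edges) has weight $1$ with the other having weight $y$, the matrix $W_{f,n}$ has \emph{both} every row sum and every column sum equal to $1+y$. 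Consequently $M$ has off-diagonal entries $-W_{f,n}(i,j)$ and diagonal entries chosen so that every row and every column sums to zero: exactly the hypothesis of Theorem 3.1 (with column sums) and of Lemma 3.2 (which needs both).

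Theorem 3.1 then gives $\sum_{T \in \Lambda_n} y^{\wt(f;T)} = \det(M_0)$, where $M_0$ is $M$ with the row and column indexed by $\mathbf{0}$ deleted. Applying Lemma 3.2 to $M$, which is a $2^n \times 2^n$ matrix with zero row and column sums, shows that the coefficient of $z$ in $\det(M - zI)$ equals $-2^n \det(M_0)$.

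The final step is a change of variable. Since $\det(M - zI) = \det\bigl(((1+y)-z)I - W_{f,n}\bigr)$, and since $1+y$ is an eigenvalue of $W_{f,n}$ (the all-ones vector is a right eigenvector), we can factor $\det(uI - W_{f,n}) = (u - (1+y)) Q(u)$ and then substitute $u = (1+y) - z$ to get $\det(M - zI) = -z\, Q((1+y) - z)$. Reading off the coefficient of $z^1$ on the right-hand side gives $-Q(1+y)$. Combining with Lemma 3.2 yields $Q(1+y) = 2^n \det(M_0) = 2^n (1+y)^{2^{n-1}-1} G(f;y)$, and since $Q(z) = \det(zI - W_{f,n})/(z - (1+y))$, this is exactly the claim of the proposition. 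The only genuinely non-routine point is verifying that $W_{f,n}$ has both row \emph{and} column sums equal to $1+y$, since Lemma 3.2 fails without zero column sums; the rest is bookkeeping through the affine substitution $u \leftrightarrow (1+y) - z$.
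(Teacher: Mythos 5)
Your proposal is correct and follows essentially the same route as the paper: rewrite the left side as the weighted in-tree sum, apply the Matrix Tree Theorem to $M=(1+y)I-W_{f,n}$ after checking that both row and column sums vanish (via the $2$-in $2$-out structure with one weight-$1$ and one weight-$y$ edge in each direction), and then use Lemma \ref{lem:char_poly} to convert $\det(M_0)$ into the coefficient of $z$ in the characteristic polynomial. The only difference is that you make explicit the affine substitution $u=(1+y)-z$ and the factorization $\det(uI-W_{f,n})=(u-(1+y))Q(u)$, which the paper leaves implicit.
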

\begin{proof}
  Applying Theorems \ref{thm:all_intrees} and \ref{thm:mtt} we see
  that $(1+y)^{2^{n-1} -1} \cdot G(f;y)$ is equal to the determinant
  of $(1+y)\cdot I_{2^n} - W_{f,n}$ after deleting the first row and
  column.  To apply Lemma \ref{lem:char_poly} it is sufficient to
  notice that the row and column sums of $(1+y)\cdot I_{2^n} -
  W_{f,n}$ are all zero because every state has an edge into it with
  weight $1$ and an edge into it with weight $y$ as well as an edge
  out of it with weight $1$ and out of it with weight $y$.
\end{proof}

\section{ Proof of Theorem \ref{thm:main}}\label{sec:galois_stepping}

 Let $\ell \colon \FF_2^{n-1} \to \FF_2$ such that $\ell(x_{n-1}\ldots
 x_1) = \sum_{i=1}^{n-1} \ell_{i} \cdot x_i$.  By Proposition
 \ref{prop:mtt_evaluation} the proof of Theorem \ref{thm:main} is
 reduced to that of computing the characteristic polynomial of
 $W_{\ell, n}$, where $W_{\ell,n}$ is the weighted adjacency matrix
 acting on formal linear combinations of elements of $\mathbb{F}^n_2$
 by
$$[x_{n-1}\ldots x_0] \to [x_nx_{n-1} \ldots x_1] + y \cdot
[\overline{x_n}x_{n-1} \ldots x_1]$$
where $x_n = x_0 + \sum_{i=1}^{n-1}
x_i \cdot \ell_i$.

Before giving the proof, we define the Galois and Fibonacci cycles of
a linear recursion.  The proof of Theorem \ref{thm:main} will make use
of the correspondence between these cycles.

The Fibonacci cycles of the linear recursion $x_n= x_0 + \ell(x_{n-1},
\ldots, x_1)$ are defined, as in the introduction, to be the set of
binary sequences, up to cyclic shift, satisfying $x_{i+n} = x_i +
\ell_{n-1} x_{i+n-1} + \cdots + \ell_1 x_{i+1}$.  We remark that these
cycles are in one-to-one correspondence with the Fibonacci cycles of
$x_{i+n} = x_i + \widetilde{\ell}(x_{n-1+i}, \ldots , x_{i+1}) := x_i
+ \ell_1 x_{i+n-1} + \cdots + \ell_{n-1} x_{i+1}$. The correspondence
amounts to reversing the sequences. Moreover, the elements of
$C(\ell)$ are in one-to-one correspondence with the cycles of
$C(\widetilde{\ell})$.

The Galois cycles of the linear recursion $x_n= x_0 + \ell(x_{n-1},
\ldots, x_1)$ are defined to be the set of sequences in $\FF_2^n$, up
to cyclic equivalence, which satisfy the linear recursion
$$\left( \begin{smallmatrix}
  \ell_1 & 1 & 0 & \cdots& 0 & 0 \\
  \ell_2 & 0 & 1 & \cdots& 0 & 0 \\
  &   & \vdots &   &  &\\
  \ell_{n-1} & 0 & 0 & \cdots & 0 & 1 \\
  1 & 0 & 0 & \cdots & 0 & 0 
\end{smallmatrix} \right) {\bf v}_i =
{\bf v}_{i+1}.$$
Projecting the Galois cycles onto any coordinate gives the Fibonacci
cycles of the same linear recursion.  See \cite[Corollary
  3.4]{GK}\footnote{This corollary does not appear in the published
  version of this paper, but is in the version available on the second
  author's website.} or \cite[Chapters 3 and 7]{GKbook}, for example.

\begin{proof}[Proof of Theorem \ref{thm:main}]
For $\alpha = \alpha_{n-1}\ldots\alpha_0 \in \FF_2^n$ define 
the character $\alpha(x) = (-1)^{\sum \alpha_i x_i}$ and the
vector $X_\alpha = \sum_x \alpha(x)[x]$.
Then
\begin{align*}
  W_{\ell,n}(X_\alpha) &= \sum_x \alpha(x) W_{\ell,n}(x)\\
  & =  \sum_{x=x_{n-1}\ldots x_1x_0} (-1)^{\sum \alpha_i x_i} [x_nx_{n-1}\ldots x_1] \\
  &     + y \sum_{x=x_{n-1}\ldots x_1x_0} (-1)^{\sum \alpha_i x_i} [\overline{x_n}x_{n-1}\ldots x_1] \\
  & = (1 + (-1)^{\alpha_0} y) \sum_{x=x_{n-1}\ldots x_1 x_0} \alpha(x) [x_nx_{n-1}\ldots x_1],
\end{align*}
where the last step comes
from changing variables in the second summand.
Next change
variables by setting $z = x_nx_{n-1}\ldots x_1$.
The sum is then
$$   W_{\ell,n}(X_\alpha) = (1 + (-1)^{\beta_{n-1}} y) \sum_{z = x_nx_{n-1}\ldots x_1} \beta(z)[z]$$
where $\beta = \beta_{n-1}\ldots \beta_1 \beta_0$ and
$\beta_i =
\alpha_{i+1} + \ell_{i+1} \alpha_0$ for $i< (n-1)$ and $\beta_{n-1} =
\alpha_0$.
In other words,
$$W_{\ell, n}(X_\alpha) = (1 + (-1)^{\beta_{n-1}} y) X_\beta$$
where
$\beta$ is the character after $\alpha$ in the Galois cycle induced by
$\ell$ on the character space.

Thus $W_{\ell,n}$ has eigenspaces corresponding to the cycles of the
Galois stepping register associated to the linear function $\ell$.
Say that there are $r$ characters in the cycle, of which $k$ have ones
in their $\beta_{n-1}$ position.  The characteristic polynomial of
$W_{\ell,n}$ on this space is then $z^r - (1+y)^{r-k}(1-y)^k$, so the
product of the eigenvalues of $(1+y)I - W_{\ell,n}$ on this space is
$(1+y)^r - (1+y)^{r-k}(1-y)^k = (1+y)^{r-k} p_k(y).$

The proof is finished by noting that the Galois cycles induced by
$\ell$ in the character space have exactly the same sizes as the
cycles induced by $\ell$ in the state spaces, and we can map from one
to the other by taking any linear functional of the one, in particular
by taking $\beta_{n-1}$ (see \cite{GK, GKbook} and the discussion
above).  So the product over all $\ell$-based Galois cycles of this
expression is exactly the same as the product over all $\ell$-based
state cycles of this same expression, where $r$ is again the length of
the cycle and $k$ is the number of ones in the state cycle.

\end{proof}

\begin{remark}
The vectors $X_\alpha$ appeared in Fryers's proof of the case when
$\ell$ generates a cycle of length $2^n-1$ \cite{fryers}.
\end{remark}

\end{document}